\normalfont\fontsize{12}{15}\bfseries}{\thesection}{1em.}{}
\newtheorem{proposition}{Proposition}[section]
\newtheorem{lemma}{Lemma}[section]
\newtheorem{theorem}{Theorem}[section]
\let\oldbibliography\thebibliography
\renewcommand{\thebibliography}[1]{%
  \oldbibliography{#1}%
  \setlength{\itemsep}{-2pt}%
}
\newcommand{\RR}{\mathbb{R}}
\renewcommand{\a}{\alpha}
\renewcommand{\b}{\beta}
\renewcommand{\d}{\delta}
\newcommand{\D}{\Delta}
\begin{document}

\baselineskip=0.20in

\makebox[\textwidth]{%
\hglue-15pt
\begin{minipage}{0.6cm}	
\vskip9pt
\end{minipage} \vspace{-\parskip}
\begin{minipage}[t]{6cm}
\footnotesize{ {\bf Discrete Mathematics Letters} \\ \underline{www.dmlett.com}}
\end{minipage}
\hfill
\begin{minipage}[t]{6.5cm}
\normalsize {\it Discrete Math. Lett.}  {\bf X} (202X) XX--XX
\end{minipage}}
\vskip36pt

\noindent
{\large \bf Mean Sombor index}\\

\noindent
J. A. M\'endez-Berm\'udez$^{1,}\footnote{Corresponding author (jmendezb@ifuap.buap.mx)}$, R. Aguilar-S\'anchez$^{2}$,  Edil D. Molina$^{3}$, Jos\'e M. Rodr\'{\i}guez$^{4}$\\

\noindent
\footnotesize
$^1${\it Instituto de F\'{\i}sica, Benem\'erita Universidad Aut\'onoma de Puebla, Apartado Postal J-48, Puebla 72570, Mexico} \\
\noindent
$^2${\it Facultad de Ciencias Qu\'imicas, Benem\'erita Universidad Aut\'onoma de Puebla,
Puebla 72570, Mexico}\\
\noindent
$^3${\it Facultad de Matem\'aticas, Universidad Aut\'onoma de Guerrero, Carlos E. Adame No.54 Col. Garita, Acapulco Gro. 39650, Mexico} \\
\noindent
$^4${\it Departamento de Matem\'aticas, Universidad Carlos III de Madrid, Avenida de la Universidad 30,
28911 Legan\'es, Madrid, Spain} \\

\noindent
 (\footnotesize Received: Day Month 202X. Received in revised form: Day Month 202X. Accepted: Day Month 202X. Published online: Day Month 202X.)\\

\setcounter{page}{1} \thispagestyle{empty}

\baselineskip=0.20in

\normalsize

 \begin{abstract}
 \noindent
  We introduce a degree--based variable topological index inspired on the power (or generalized) mean. We name this new index as the mean Sombor index: $mSO_\alpha(G) = \sum_{uv \in E(G)} \left[\left( d_u^\alpha+d_v^\alpha \right) /2 \right]^{1/\alpha}$. Here, $uv$ denotes the edge of the graph $G$ connecting the vertices $u$ and $v$, $d_u$ is the degree of the vertex $u$, and $\alpha \in \mathbb{R} \backslash \{0\}$. We also consider the limit cases $mSO_{\alpha\to 0}(G)$ and $mSO_{\alpha\to\pm\infty}(G)$. Indeed, for given values of $\alpha$, the mean Sombor index is related to well-known topological indices such as the inverse sum indeg index, the reciprocal Randic index, the first Zagreb index, the Stolarsky--Puebla index and several Sombor indices. Moreover, through a quantitative structure property relationship (QSPR) analysis we show that $mSO_\alpha(G)$ correlates well with several physicochemical properties of octane isomers. Some mathematical properties of mean Sombor indices as well as bounds and new relationships with known topological indices are also discussed.  \\[2mm]
 {\bf Keywords:} degree--based topological index; power mean; Sombor indices; QSPR analysis.\\[2mm]
 {\bf 2020 Mathematics Subject Classification:} 26E60, 05C09, 05C92.
 \end{abstract}

\baselineskip=0.20in

\section{Preliminaries}

For two positive real numbers $x,y$ the power mean or generalized mean
$PM_\alpha(x,y)$ with exponent $\alpha\in \mathbb{R} \backslash \{0\}$ is given as
\begin{equation}
	PM_\alpha(x,y)=\left( \frac{x^\alpha+y^\alpha}{2} \right)^{1/\alpha} \, ,
	\label{Mxy}
\end{equation}
see e.g.~\cite{B03,S09}. $PM_\alpha(x,y)$ is also known as H\"older mean.
For given values of $\alpha$, $PM_\alpha(x,y)$ reproduces well-known mean values. As examples, in Table~\ref{TableMxy}
we show some expressions for $PM_\alpha(x,y)$ for selected values of $\alpha$ with their corresponding
names, when available.

\begin{table}[b!]
	\caption{Expressions for the generalized mean $PM_\alpha(x,y)$ for selected values of $\alpha$.} 
	\centering 
	\begin{tabular}{r l l} 
		\hline\hline 
		$\alpha$ & $PM_\alpha(x,y)$ & name (when available) \\ [0.5ex] 
		\hline 
		$-\infty$ & $PM_{\alpha\to-\infty}(x,y)=\min(x,y)$ & minimum value  \\ [1ex]
		$-1$ & $\displaystyle PM_{-1}(x,y)=\frac{2xy}{x+y}$ & harmonic mean  \\ [1ex]
		0 & $\displaystyle PM_{\alpha\to 0}(x,y)=\sqrt{xy}$ & geometric mean  \\ [1ex]
		$1/2$ & $\displaystyle PM_{1/2}(x,y)=\left( \frac{\sqrt{x}+\sqrt{y}}{2} \right)^2$ &  \\ [1ex]
		1 & $\displaystyle PM_1(x,y)=\frac{x+y}{2}$ & arithmetic mean  \\ [1ex]
		2 & $\displaystyle PM_2(x,y)=\left( \frac{x^2+y^2}{2} \right)^{1/2}$ & root mean square \\ [1ex]
		3 & $\displaystyle PM_3(x,y)=\left( \frac{x^3+y^3}{2} \right)^{1/3}$ & cubic mean \\ [1ex]
		$\infty$ & $PM_{\alpha\to\infty}(x,y)=\max(x,y)$ & maximum value  \\ [1ex] 
		\hline 
	\end{tabular}
	\label{TableMxy} 
\end{table}

There is a well known inequality for the power mean, namely~\cite{OT57,C66,L74}:
For any $\alpha_1<\alpha_2$,
\begin{equation}
	PM_{\alpha_1}(x,y) \le PM_{\alpha_2}(x,y) \, ,
	\label{ineq1}
\end{equation}
where the equality is attained for $x=y$.

\section{The mean Sombor index}

A large number of graph invariants of the form
\begin{equation}
	TI(G) = \sum_{uv \in E(G)} F(d_u,d_v)
	\label{TI}
\end{equation}
are currently been studied in mathematical chemistry; where $uv$ denotes the edge of the graph $G$ connecting the vertices $u$ and $v$,
$d_u$ is the degree of the vertex $u$, and $F(x,y)$ is an appropriate chosen function, see e.g.~\cite{G13,S15,S21}.

Inspired by the power mean and given a simple graph $G=(V(G),E(G))$, here we choose
the function $F(x,y)$ in Eq.~(\ref{TI}) as the power mean $PM_\alpha(x,y)$ and define the degree--based variable topological index
\begin{equation}
	mSO_\alpha(G)= \sum_{uv \in E(G)} \left( \frac{d_u^\alpha+d_v^\alpha}{2} \right)^{1/\alpha} ,
	\label{MG}
\end{equation}
where $\alpha\in \mathbb{R} \backslash \{0\}$. We name $mSO_\alpha(G)$ as the \emph{mean Sombor index}.

\begin{table}[b!]
	\caption{Expressions for the mean Sombor index $mSO_\alpha(G)$ for selected values of $\alpha$.} 
	\centering 
	\begin{tabular}{r l l} 
		\hline\hline 
		$\alpha$ & $mSO_\alpha(G)$ & index equivalence \\ [0.5ex] 
		\hline 
		$-\infty$ & $\displaystyle mSO_{\alpha\to-\infty}(G)=\sum_{uv \in E(G)} \min(d_u,d_v)$ & $\displaystyle SP_{\alpha\to-\infty}(G)$   \\ [1ex]
		$-1$ & $\displaystyle mSO_{-1}(G)=\sum_{uv \in E(G)} \frac{2d_ud_v}{d_u+d_v}$ & $2ISI(G)$  \\ [1ex]
		0 & $\displaystyle mSO_{\alpha\to 0}(G)=\sum_{uv \in E(G)} \sqrt{d_ud_v}$ & $R^{-1}(G)$  \\ [1ex]
		$1/2$ & $\displaystyle mSO_{1/2}(G)=\sum_{uv \in E(G)} \left( \frac{\sqrt{d_u}+\sqrt{d_v}}{2} \right)^2$ & $2^{-2} KA^1_{1/2,2}(G)$ \\ [1ex]
		1 & $\displaystyle mSO_1(G)=\sum_{uv \in E(G)} \frac{d_u+d_v}{2}$ & $2^{-1}M_1(G)$  \\ [1ex]
		2 & $\displaystyle mSO_2(G)=\sum_{uv \in E(G)} \left( \frac{d_u^2+d_v^2}{2} \right)^{1/2}$ & $2^{-1/2}SO(G)$ \\ [1ex]
		3 & $\displaystyle mSO_3(G)=\sum_{uv \in E(G)} \left( \frac{d_u^3+d_v^3}{2} \right)^{1/3}$ & $2^{-1/3} KA^1_{3,1/3}(G)$ \\ [1ex]
		$\infty$ & $\displaystyle mSO_{\alpha\to\infty}(G)=\sum_{uv \in E(G)} \max(d_u,d_v)$ & $\displaystyle SP_{\alpha\to\infty}(G)$  \\ [1ex] 
		\hline 
	\end{tabular}
	\label{TableMG} 
\end{table}

Note, that for given values of $\alpha$, the mean Sombor index is related to known topological indices:
$mSO_{-1}(G) = 2ISI(G)$, where $ISI(G)$ is the inverse sum indeg index~\cite{VG10,V10},
$mSO_{\alpha\to 0}(G) = R^{-1}(G)$, where $R^{-1}(G)$ is the reciprocal Randic index~\cite{GFE14}, and
$mSO_1(G) = M_1(G)/2$, where $M_1(G)$ is the first Zagreb index~\cite{GT72}.
Also, it is relevant to stress that the mean Sombor index is related to several Sombor indices:
$mSO_2(G) = 2^{-1/2}SO(G)$, where $SO(G)$ is the Sombor index~\cite{G21a},
$mSO_\alpha(G) = 2^{-1/\alpha} SO_\alpha(G)$, where $SO_\alpha(G)$ is the $\alpha$-Sombor index~\cite{RDA21}, and
$mSO_\alpha(G) = 2^{-1/\alpha} KA^1_{\alpha,1/\alpha}(G)$, where
$KA^1_{\alpha,\beta}(G)= \sum_{uv\in E(G)} \left( d_u^\alpha + d_v^\alpha \right)^\beta$
is the first $(\alpha,\beta)-KA$ index~\cite{K19}.
In addition, the limit cases $mSO_{\alpha\to\pm\infty}(G)$ correspond with the limit cases $SP_{\alpha\to\pm\infty}(G)$
of the recently introduced Stolarsky--Puebla index~\cite{MAAS22}.

In Table~\ref{TableMG} we report some expressions for $mSO_\alpha(G)$
for selected values of $\alpha$ that we identify with known topological indices.

\section{QSPR study of $mSO_\alpha(G)$ on octane isomers}
\label{QSPR}

As a first application of mean Sombor indices, here we perform a quantitative structure property
relationship (QSPR) study of $mSO_\alpha(G)$
to model some physicochemical properties of octane isomers.
Here we choose to study the following properties:
acentric factor (AcentFac), boiling point (BP), heat capacity at constant pressure (HCCP),
critical temperature (CT), relative density (DENS), standard enthalpy of formation (DHFORM),
standard enthalpy of vaporization (DHVAP), enthalpy of formation (HFORM), heat of vaporization (HV) at 25$^{\circ}$C,
enthalpy of vaporization (HVAP), and entropy (S).
The experimental values of the physicochemical
properties of the octane isomers were kindly provided by Dr. S. Mondal, see Table 2 in Ref.~\cite{MDDP21}.

\begin{figure}[ht!]
	\begin{center}
		\includegraphics[scale=0.6]{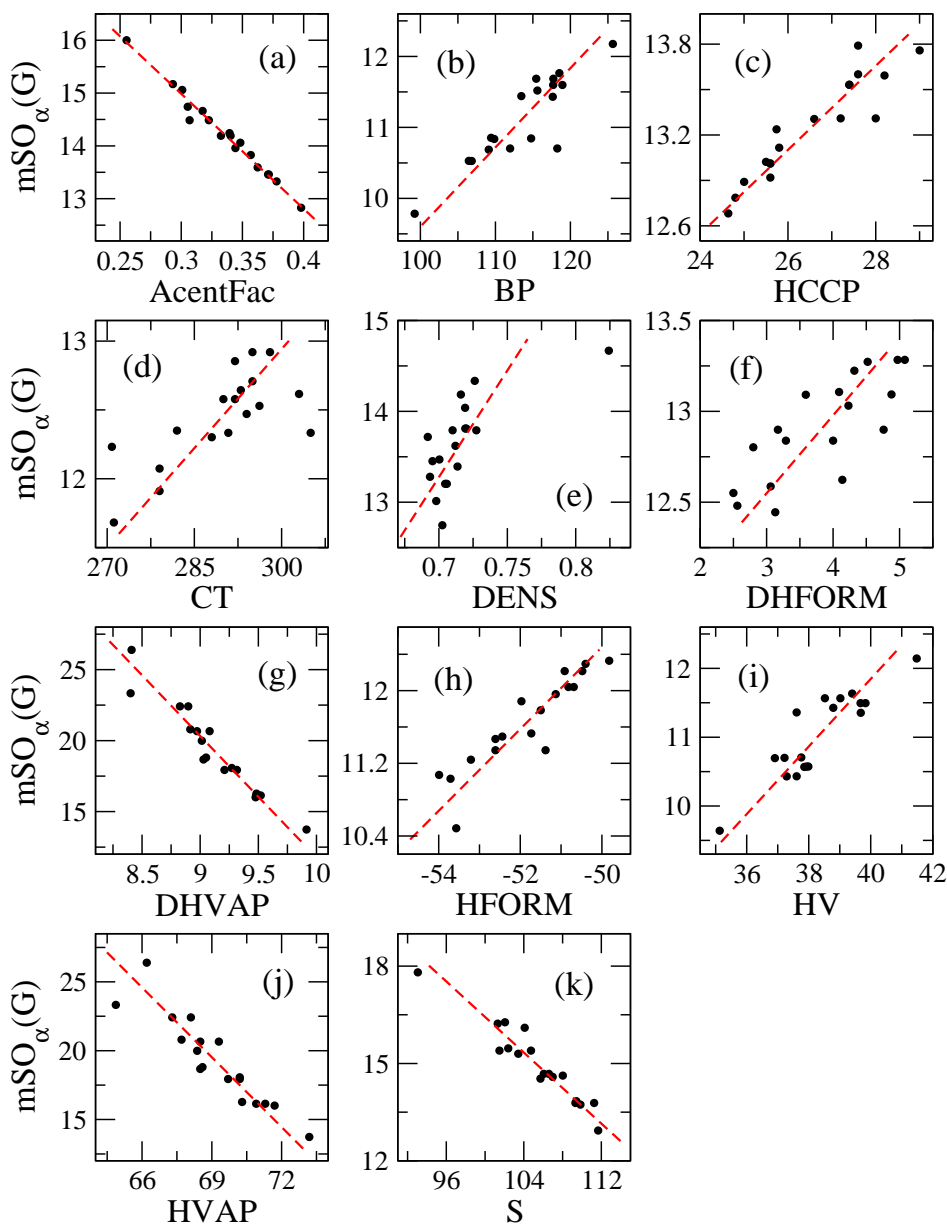}
		\caption{\footnotesize{
		Mean Sombor index $mSO_\alpha(G)$ vs.~the physicochemical properties of octane isomers, for the
		values of $\alpha$ that maximize the correlations: (a) $\alpha=0$, (b) $\alpha=-8.19$, (c)
		$\alpha=-0.87$, (d) $\alpha=-2.05$, (e) $\alpha=-0.53$, (f) $\alpha=-1.28$, (g) $\alpha\rightarrow\infty$,
		(h) $\alpha=-4.23$, (i) $\alpha\rightarrow-\infty$, (j) $\alpha\rightarrow\infty$, and (k) $\alpha=0.58$.
		Red dashed lines are the linear QSPR models of Eq.~(\ref{P}), with the regression and statistical parameters
		resumed in Table~\ref{a_values}.
		}}
		\label{Fig01}
	\end{center}
\end{figure}

In Fig.~\ref{Fig01} we plot $mSO_\alpha(G)$ vs.~the physicochemical properties of octane isomers for the
values of $\alpha$ that maximize the absolute value of Pearson's correlation coefficient $r$; see Table~\ref{a_values}.
Moreover, in Fig.~\ref{Fig01} we tested the following linear
regression model
\begin{equation}
	{\cal P} = c_1 [mSO_\alpha(G)] + c_2,
	\label{P}
\end{equation}
where ${\cal P}$ represents a given physicochemical property.
In Table~\ref{a_values} we resume the regression and statistical parameters of the linear QSPR models
(see the red dashed lines in Fig.~\ref{Fig01}) given by Eq.~(\ref{P}).

\begin{table}[h!]
	\caption{For the physicochemical properties ${\cal P}$ reported in Fig.~\ref{Fig01}: values of $\alpha$ that maximize the absolute value of Pearson's correlation coefficient $r$. $c_2$, $c_1$, $SE$, $F$, and $SF$ are the intercept, slope, standard error, $F$-test, and statistical significance, respectively, of the linear QSPR models of Eq.~(\ref{P}).}
	\label{a_values}
	\centering
	\begin{tabular}{r r r r r r r r }
		\hline\hline
		property ${\cal P}$ & $\a$      & $r$       & $c_2$        & $c_1$       & $SE$     & $F$ & $SF$  \\ [0.5ex]
		\hline
		AcentFac			& $0$	& $-0.990$  & $0.988$   & $-0.046$ & $0.005$ & $749.116$ & $7.25$E-15
		\\ [1ex]
		BP					& $-8.19$	& $0.886$  & $14.115$  & $8.946$  & $2.929$ & $58.126$ & $1.00$E-06
		\\ [1ex]
		HCCP				& $-0.87$	& $0.928$  & $-21.216$ & $3.604$  & $0.504$ & $98.128$ & $3.13$E-08
		\\ [1ex]
		CT					& $-2.05$	& $0.717$  & $30.048$  & $20.859$ & $6.788$ & $16.934$ & $8.10$E-04
		\\ [1ex]
		DENS				& $-0.53$	& $0.702$  & $0.134$   & $0.042$  & $0.022$ & $15.518$ & $1.17$E-03
		\\ [1ex]
		DHFORM				& $-1.28$	& $0.781$  & $-26.253$ & $2.331$  & $0.546$ & $24.924$ & $1.33$E-04
		\\ [1ex]
		DHVAP				& $\infty$	& $-0.962$  & $11.375$  & $-0.117$ & $0.108$ & $196.401$ & $2.11$E-10
		\\ [1ex]
		HFORM				& $-4.23$	& $0.912$  & $-77.705$ & $2.220$  & $0.530$ & $78.903$ & $1.39$E-07
		\\ [1ex]
		HV					& $-\infty$ & $0.895$  & $15.880$  & $2.036$   & $1.286$ & $4.622$ & $4.72$E-02
		\\ [1ex]
		HVAP				& $\infty$	& $-0.921$  & $80.550$  & $-0.592$ & $0.813$ & $89.724$ & $5.81$E-08
		\\ [1ex]
		S					& $0.58$	& $-0.956$  & $160.060$ & $-3.655$ & $1.372$ & $98.128$ & $3.13$E-08
		\\ [1ex]
		\hline

	\end{tabular}
\end{table}

From Table~\ref{a_values} we can conclude that $mSO_\alpha(G)$ provides good predictions of
AcentFac, BP, HCCP, DHVAP, HFORM, HV, HVAP, and S for which the
correlation coefficients (absolute values) are closer or higher than 0.9.
Note that for all these physicochemical properties of octane isomers the statistical significance
of the linear model of Eq.~(\ref{P}) is far below 5\%.
Also notice that the mean Sombor index that better correlates (linearly) with the AcentFac is $mSO_{\alpha\to 0}(G)$,
which indeed coincides with the reciprocal Randic index.
Moreover, we found that $|r|$ is maximized when $\alpha \to \infty$, for DHVAP and HVAP, and when
$\alpha \to -\infty$ for HV. This means that the limiting cases $mSO_{\alpha\to\pm\infty}(G)$ are also
relevant from an application point of view.

\section{Inequalities involving $mSO_\alpha(G)$}
\label{ineq}

Equation~(\ref{ineq1}) can be straightforwardly used to
state a monotonicity property for the $mSO_\alpha(G)$ index, as well as inequalities for related indices.
That is, if $ \alpha_1<\alpha_2 $ we have,
\begin{equation}
	mSO_{\alpha_1}(G) \le mSO_{\alpha_2}(G) \, ,
	\label{ineq11}
\end{equation}
which implies, for the the first $(a,b)-KA$ index, that
\begin{equation}
	2^{-1/\alpha_1} KA^1_{\alpha_1,1/\alpha_1}(G) \le 2^{-1/\alpha_2} KA^1_{\alpha_2,1/\alpha_2}(G) \, , \qquad \alpha_1<\alpha_2 \, ,
	\label{ineq12}
\end{equation}
and moreover
\begin{equation}
	2ISI(G) \le R^{-1}(G) \le 2^{-2} KA^1_{1/2,2}(G) \le 2^{-1}M_1(G) \le 2^{-1/2}SO(G) \, .
	\label{ineq13}
\end{equation}
Note that this last inequality involves the inverse sum indeg index, the reciprocal Randic index, the $(a,b)-KA$ index,
the first Zagreb index, and the Sombor index. It is fair to acknowledge  that
the very last inequality in (\ref{ineq13}) was already included in the Theorem 3.1 of~\cite{MMM21}.

In what follows we will state bounds for the mean Sombor index as well as new relationships with known topological indices.

We will use the following particular case of Jensen's inequality.

\begin{lemma} \label{l:Jensen}
	If $g$ is a convex function on $\RR$ and $x_1,\dots,x_m \in \RR$, then
	$$
	g\Big(\frac{ x_1+\cdots +x_m }{m} \Big) \le \frac{1}{m} \, \big(g( x_1)+\cdots +g(x_m) \big) .
	$$
	If $g$ is strictly convex, then the equality is attained in the inequality if and only if $x_1=\dots=x_m$.
\end{lemma}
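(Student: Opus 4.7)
The plan is to prove this classical finite-form Jensen inequality by induction on $m$, handling the equality case for strictly convex $g$ in parallel.

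The base case $m=2$ is just the defining inequality of convexity evaluated at $\lambda=1/2$, which immediately yields $g((x_1+x_2)/2)\le (g(x_1)+g(x_2))/2$; strict convexity moreover forces $x_1=x_2$ whenever equality holds.

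For the inductive step, assuming the statement for some $m\ge 2$, I would decompose the mean of $m+1$ points as the convex combination
\[
\frac{x_1+\cdots+x_{m+1}}{m+1} \;=\; \frac{m}{m+1}\cdot\frac{x_1+\cdots+x_m}{m} \;+\; \frac{1}{m+1}\cdot x_{m+1},
\]
apply convexity (with weights $m/(m+1)$ and $1/(m+1)$) to bound the left-hand side by $\frac{m}{m+1}g(\bar x)+\frac{1}{m+1}g(x_{m+1})$, where $\bar x := (x_1+\cdots+x_m)/m$, and then invoke the inductive hypothesis on $g(\bar x)$. The arithmetic collapses to the desired bound $(g(x_1)+\cdots+g(x_{m+1}))/(m+1)$.

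I do not anticipate any serious obstacle; the only part needing a bit of care is the equality statement, since the strict convexity hypothesis must be propagated through both steps of the chain. Specifically, under strict convexity, equality in the first application forces $\bar x = x_{m+1}$ (because the weights $m/(m+1)$ and $1/(m+1)$ lie in $(0,1)$), while equality in the inductive hypothesis forces $x_1=\cdots=x_m$; together these give $x_1=\cdots=x_{m+1}$, which closes the equality case.
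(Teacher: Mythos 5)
Your induction argument is correct and complete: the base case is the definition of convexity with weight $1/2$, the inductive step via the decomposition $\frac{x_1+\cdots+x_{m+1}}{m+1}=\frac{m}{m+1}\bar x+\frac{1}{m+1}x_{m+1}$ is the standard one, and you handle the equality case properly by forcing equality in both links of the chain. Note, however, that the paper offers no proof of this lemma at all --- it is stated as a known particular case of Jensen's inequality and simply used --- so there is nothing to compare against; your write-up would serve as a self-contained justification if one were wanted.
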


\begin{theorem}
	Let $G$ be a graph with $m$ edges and $\a\in \RR $; if $\a>1$ then
	$$
	mSO_\a(G) \le \frac{m^{1-1/\a}}{2^{1/\a}}\left( M_1^{\a+1}(G) \right)^{1/\a} \, ,
	$$
	if $\a<1$ and $\a\ne 0$ then
	$$
	mSO_\a(G) \ge \frac{m^{1-1/\a}}{2^{1/\a}}\left( M_1^{\a+1}(G) \right)^{1/\a} \, ,
	$$
	and the equality in each bound is attained for a connected graph $G$
if and only if $G$ is regular or biregular.
\end{theorem}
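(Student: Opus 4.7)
The plan is to apply Jensen's inequality (Lemma~\ref{l:Jensen}) to the function $g(x)=x^{1/\a}$ evaluated at the $m$ numbers $a_{uv}:=(d_u^\a+d_v^\a)/2$, one per edge $uv\in E(G)$. A useful preliminary identity, obtained by counting each vertex contribution $d_v^\a$ once per incident edge, is
$$
\sum_{uv\in E(G)}(d_u^\a+d_v^\a)=\sum_{v\in V(G)}d_v^{\a+1}=M_1^{\a+1}(G),
$$
so the arithmetic mean of the $a_{uv}$ equals $M_1^{\a+1}(G)/(2m)$.

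Next I would split on the sign of the second derivative of $g$. A direct computation gives $g''(x)=(1/\a)(1/\a-1)x^{1/\a-2}$, so $g$ is \emph{strictly concave} on $(0,\infty)$ when $\a>1$ (since then $1/\a\in(0,1)$) and \emph{strictly convex} when $\a<1$ with $\a\ne 0$ (both for $0<\a<1$, where $1/\a>1$, and for $\a<0$, where $(1/\a)(1/\a-1)>0$). In the concave case, Lemma~\ref{l:Jensen} applied with the opposite sign yields
$$
\frac{1}{m}\sum_{uv\in E(G)}a_{uv}^{1/\a}\le \Big(\tfrac{1}{m}\textstyle\sum a_{uv}\Big)^{1/\a}=\Big(\tfrac{M_1^{\a+1}(G)}{2m}\Big)^{1/\a},
$$
which rearranges to the claimed upper bound for $\a>1$. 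In the convex case the inequality is reversed, producing the lower bound for $\a<1$, $\a\ne 0$.

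It remains to characterize the equality. Strict convexity/concavity of $g$ forces $a_{uv}$ to be edge-independent, i.e.\ there exists $C>0$ with $d_u^\a+d_v^\a=C$ for every $uv\in E(G)$. Define $f(t)=(C-t^\a)^{1/\a}$ on the relevant domain; then any neighbor of a vertex of degree $d$ has degree $f(d)$, and the immediate computation $f(f(d))=d$ shows $f$ is an involution. Hence in a connected $G$ every degree lies in the set $\{d_1,f(d_1)\}$: if $d_1=f(d_1)$, i.e.\ $2d_1^\a=C$, then $G$ is $d_1$-regular; otherwise every edge joins a vertex of degree $d_1$ with one of degree $f(d_1)$, so $G$ is bipartite and biregular. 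Conversely, in a regular or biregular graph $d_u^\a+d_v^\a$ is clearly edge-independent and equality holds.

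The proof is thus mostly a one-line application of Jensen once the right function $g$ and the handshake identity $\sum_{uv}(d_u^\a+d_v^\a)=M_1^{\a+1}(G)$ are in view; the only genuinely non-formal step is the equality case, where the involutive nature of $f$ together with connectedness is needed to pass from the pointwise identity $d_u^\a+d_v^\a\equiv C$ to the structural conclusion that $G$ is regular or biregular.
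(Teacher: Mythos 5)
Your proposal is correct and follows essentially the same route as the paper: the handshake identity $\sum_{uv\in E(G)}(d_u^\a+d_v^\a)=M_1^{\a+1}(G)$, Jensen's inequality for $x^{1/\a}$ split according to whether $\a>1$ (concave) or $\a<1$, $\a\ne 0$ (convex), and the equality case via constancy of $d_u^\a+d_v^\a$ over edges combined with connectedness. Your involution $f(t)=(C-t^\a)^{1/\a}$ is just a slightly more elaborate phrasing of the paper's observation that two edges sharing a vertex force their other endpoints to have equal degree.
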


\begin{proof}
	Assume first that $\a>1$ then, for $x\ge 0$, $x^{1/\a}$ is a concave function and by Lemma~\ref{l:Jensen} we have
	$$
	\begin{aligned}
		\frac{1}{m}\sum_{uv\in E(G)} \left( \frac{d_u^\a + d_v^\a}{2} \right)^{1/\a}
		&\le
		\left( \frac{1}{2m} \sum_{uv\in E(G)} (d_u^\a + d_v^\a)  \right)^{1/\a} \\
		&=
		\frac{1}{2^{1/\a} m^{1/\a}}\left( \sum_{u\in V(G)} d_u^{\a+1} \right)^{1/\a}
		=
		\frac{1}{2^{1/\a} m^{1/\a}}\left( M_1^{\a+1}(G) \right)^{1/\a}  \, .
	\end{aligned}
	$$

	Assume now that $\a<1$ and $\a\ne 0$, then $x^{1/\a}$ is a convex function and by Jensen's inequality we obtain
	$$
	\begin{aligned}
		\frac{1}{m}\sum_{uv\in E(G)} \left( \frac{d_u^\a + d_v^\a}{2} \right)^{1/\a}
		&\ge
		\left( \frac{1}{2m} \sum_{uv\in E(G)} (d_u^\a + d_v^\a)  \right)^{1/\a} \\
		&=
		\frac{1}{2^{1/\a} m^{1/\a}}\left( \sum_{u\in V(G)} d_u^{\a+1} \right)^{1/\a}
		=
		\frac{1}{2^{1/\a} m^{1/\a}}\left( M_1^{\a+1}(G) \right)^{1/\a} .
	\end{aligned}
	$$
	If $G$ is regular or biregular, with maximum and minimum degrees $\D$ and $\d$, respectively,
	$$
	mSO_\a(G)=m\left( \frac{\D^\a+\d^\a}{2} \right)^{1/\a}
= \frac{m^{1-1/\a}}{2^{1/\a}} \left(m( \D^\a+\d^\a) \right)^{1/\a}
= \frac{m^{1-1/\a}}{2^{1/\a}} \left( M_1^{\a+1}(G) \right)^{1/\a} .
	$$
	If any of these equalities hold, for every $uv,u^\prime v^\prime \in E(G)$, by Lemma~\ref{l:Jensen}, we have $d_u^\a+d_v^\a=d_{u^\prime}^\a + d_{v^\prime}^\a$. In particular if we take $u=u^\prime$ we have $d_v = d_{v^\prime}$, so all the neighbors of a vertex $u\in V(G)$ have the same degree. Thus, since $G$ is a connected graph, $G$ is regular or biregular.
\end{proof}

In order to prove the next result we need an additional technical result.
In~\cite[Theorem 3]{BMRS} appears a converse of H\"older inequality, which in the discrete case can be stated as follows~\cite[Corollary 2]{BMRS}.

\begin{lemma} \label{c:holder}
	If $1<p,q<\infty$ with $1/p+1/q=1$, $x_j,y_j\ge 0$ and $a y_j^q \le x_j^p \le b y_j^q$ for $1\le j \le k$ and some positive constants $a,b,$ then:
	$$
	\Big(\sum_{j=1}^k x_j^p \Big)^{1/p} \Big(\sum_{j=1}^k y_j^q \Big)^{1/q}
	\le K_p(a ,b ) \sum_{j=1}^k x_jy_j ,
	$$
	where
	$$
	K_p(a ,b )
	= \begin{cases}
		\,\displaystyle\frac{1}{p} \Big( \frac{a }{b } \Big)^{1/(2q)} + \frac{1}{q} \Big( \frac{b }{a } \Big)^{1/(2p)}\,, & \quad \text{if } 1<p<2 \, ,
		\\
		\, & \,
		\\
		\,\displaystyle\frac{1}{p} \Big( \frac{b }{a } \Big)^{1/(2q)} + \frac{1}{q} \Big( \frac{a }{b } \Big)^{1/(2p)}\,, & \quad \text{if } p \ge 2 \, .
	\end{cases}
	$$
	
	If $x_j>0$ for some $1\le j \le k$, then the equality in the bound is attained if and only if $a =b$ and $x_j^p=a y_j^q$ for every $1\le j \le k$.
\end{lemma}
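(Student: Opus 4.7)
The lemma is a converse H\"older inequality of Cassels--Specht type: bilateral control $ay_j^q\le x_j^p\le by_j^q$ on the ratios allows one to reverse H\"older's inequality with an explicit constant $K_p(a,b)\ge 1$. Although the statement is quoted from~\cite{BMRS}, I would sketch a self-contained derivation using only Young's inequality together with an optimally chosen scaling parameter.

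The key observation is a pointwise Young identity with a free parameter $\lambda>0$: for $x,y>0$,
$$xy=(\lambda x)(y/\lambda)=\rho(s)\left(\frac{\lambda^p x^p}{p}+\frac{y^q}{q\lambda^q}\right),\qquad s:=\lambda^{p+q}\,\frac{x^p}{y^q},\quad \rho(s):=\frac{p\,s^{1/p}}{s+p-1}\,.$$
Classical Young's inequality is precisely the statement $\rho(s)\le 1$, with equality iff $s=1$. A short calculation gives $1/\rho(s)=\tfrac1p\,s^{1/q}+\tfrac1q\,s^{-1/p}$ and shows that $\rho$ is unimodal with maximum at $s=1$.

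I would then choose $\lambda^{p+q}=1/\sqrt{ab}$, which maps the constraint set of $s_j:=\lambda^{p+q}x_j^p/y_j^q$ onto the log-symmetric interval $[\sqrt{a/b},\sqrt{b/a}]$. Since $\rho$ is unimodal, its minimum on this interval is attained at an endpoint; evaluating $1/\rho$ at $s=\sqrt{b/a}$ and $s=\sqrt{a/b}$ yields exactly the two expressions appearing in the piecewise definition of $K_p(a,b)$. The case split $p\ge 2$ vs.\ $1<p<2$ corresponds to identifying which endpoint gives the larger value of $1/\rho$; this reduces to comparing $\tfrac1p\sinh(L/(2q))$ and $\tfrac1q\sinh(L/(2p))$, where $L=\ln(b/a)$, and the sign is governed by whether $p$ exceeds its conjugate $q=p/(p-1)$, i.e.\ by $p\gtrless 2$. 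Setting $\rho_*:=\min_j\rho(s_j)$, we then have $1/\rho_*=K_p(a,b)$.

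The argument concludes in two lines. Summing the pointwise identity and using $\rho(s_j)\ge\rho_*$ gives
$$\sum_{j=1}^k x_jy_j\ge\rho_*\left(\frac{\lambda^p}{p}\sum_j x_j^p+\frac{1}{q\lambda^q}\sum_j y_j^q\right),$$
and the weighted AM--GM inequality with weights $1/p,1/q$ bounds the parenthesized expression below by $\bigl(\sum x_j^p\bigr)^{1/p}\bigl(\sum y_j^q\bigr)^{1/q}$, the $\lambda$-factors cancelling. For the equality analysis, the $\rho$-step forces every $s_j$ to land at the extremal endpoint $\sqrt{b/a}$ or $\sqrt{a/b}$, while equality in AM--GM forces $\lambda^{p+q}=\sum y_j^q/\sum x_j^p$; these two conditions are compatible only when $a=b$ and every $x_j^p=ay_j^q$, matching the stated equality case. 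The main technical obstacle is the endpoint comparison step—essentially a trigonometric/hyperbolic inequality—since the rest of the proof is a mechanical application of Young's and AM--GM inequalities.
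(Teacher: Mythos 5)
The paper does not actually prove this lemma: it is quoted from \cite[Theorem 3 and Corollary 2]{BMRS}, so the only meaningful comparison is with that reference. Your derivation is a genuine, essentially self-contained proof along classical Cassels--Specht lines, and the main inequality goes through. I checked the key computations: with $s=\lambda^{p+q}x^p/y^q$ one indeed has $xy=\rho(s)\bigl(\lambda^p x^p/p+y^q/(q\lambda^q)\bigr)$ with $\rho(s)=p s^{1/p}/(s+p-1)$ and $1/\rho(s)=\tfrac1p s^{1/q}+\tfrac1q s^{-1/p}$; the identity $(\log\rho)'(s)=(p-1)(1-s)/(ps(s+p-1))$ confirms unimodality with maximum at $s=1$; the choice $\lambda^{p+q}=1/\sqrt{ab}$ symmetrizes the constraint interval; the endpoint values of $1/\rho$ are exactly the two branches of $K_p(a,b)$; and the endpoint comparison reduces, as you say, to the monotonicity of $t\mapsto t\sinh(L/(2t))$, which follows from $\tanh w\le w$. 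The summation step plus weighted AM--GM (with the $\lambda$ factors cancelling) is correct; terms with $x_j=y_j=0$ are harmless since the hypothesis forces $x_j=0$ exactly when $y_j=0$. One small imprecision: with $\rho_*:=\min_j\rho(s_j)$ you only get $1/\rho_*\le K_p(a,b)$, not equality, since the $s_j$ need not reach the endpoints; this only strengthens the bound.

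The one genuine gap is in the equality analysis, and it sits precisely at $p=2$. For $p\ne 2$ the maximum of $1/\rho$ over $[\sqrt{a/b},\sqrt{b/a}]$ is attained at a single endpoint, so equality forces every $s_j$ to that endpoint, and the AM--GM equality condition then does yield $a=b$ and $x_j^p=ay_j^q$, as you claim. But for $p=q=2$ the two endpoint values of $1/\rho$ coincide, so equality only forces each $s_j$ to one of the two endpoints, and the AM--GM condition can then be balanced with $a\ne b$. Concretely: $p=q=2$, $a=1$, $b=16$, $(x_1,x_2)=(2,4)$, $(y_1,y_2)=(2,1)$ satisfies $a y_j^2\le x_j^2\le b y_j^2$, and gives $\bigl(\sum x_j^2\bigr)^{1/2}\bigl(\sum y_j^2\bigr)^{1/2}=10$ while $K_2(1,16)\sum x_jy_j=\tfrac54\cdot 8=10$, i.e.\ equality with $a\ne b$. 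So the ``only if'' direction of the stated equality characterization actually fails at $p=2$ (this is the familiar equality case of Cassels' inequality, where the mass splits between the two extremes); your sketch asserts the incompatibility of the two equality conditions without justification and would need to either exclude $p=2$ or handle it separately.
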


\begin{theorem}
	Let $G$ be a graph with $m$ edges, maximum degree $\D$ and minimum degree $\d$, let $0 < \a < 1 $, then
	
	$$
	mSO_\a(G) \le  \frac{m^{1-1/\a}}{2^{1/\a}} \, K_{\a} \left(  M_1^{\a+1}(G) \right)^{1/\a}
	$$
	where
	$$
	K_{\a}^{\a} = \left\lbrace
	\begin{array}{cc}
		\a\left( \frac{\D}{\d}\right)^{\frac{\a - \a^2}{2}} + (1-\a)\left( \frac{\D}{\d}\right)^{\frac{-\a^2}{2}}, & \text{if } 0<\a\le\frac{1}{2} \;,\\
		
		\a\left( \frac{\D}{\d}\right)^{\frac{\a^2-\a}{2}} + (1-\a)\left( \frac{\D}{\d}\right)^{\frac{\a^2}{2}}, & \text{if } \frac{1}{2} < \a < 1 \, ,
	\end{array}
	\right.
	$$
	the equality holds if and only if $G$ is a regular graph.
	
\end{theorem}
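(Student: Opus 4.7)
Given the form of the upper bound, which differs from the lower bound of the previous theorem only by a multiplicative factor $K_\a$ depending on the ratio $\D/\d$, the natural strategy is to invoke the converse H\"older inequality (Lemma~\ref{c:holder}) with conjugate exponents $p = 1/\a$ and $q = 1/(1-\a)$. Both exceed $1$ because $0 < \a < 1$, and the dichotomy $p \ge 2$ versus $1 < p < 2$ precisely matches the dichotomy $0 < \a \le \tfrac12$ versus $\tfrac12 < \a < 1$ appearing in the definition of $K_\a^\a$; this is the structural hint that Lemma~\ref{c:holder} is the right tool.

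The plan is to choose, for each edge $j = uv \in E(G)$, the values $x_j = (d_u^\a + d_v^\a)/2$ and $y_j = 1$, so that the three sums appearing in Lemma~\ref{c:holder} become identifiable graph invariants: $\sum_j x_j^p = mSO_\a(G)$, $\sum_j y_j^q = m$, and $\sum_j x_j y_j = \tfrac{1}{2} M_1^{\a+1}(G)$, the last identity following from $\sum_{uv \in E(G)}(d_u^\a + d_v^\a) = \sum_{u\in V(G)} d_u^{\a+1}$. Since $\d \le d_u, d_v \le \D$, each $x_j$ lies in the interval $[\d^\a, \D^\a]$, which supplies the ratio $b/a = (\D/\d)^\a$ that Lemma~\ref{c:holder} demands on the quotients $x_j^p/y_j^q$.

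Substituting $1/p = \a$, $1/q = 1-\a$, and $b/a = (\D/\d)^\a$ into Lemma~\ref{c:holder}'s two-case formula for $K_p(a,b)$ produces exactly the two displayed expressions for $K_\a^\a$, treating the cases $p \ge 2$ and $1 < p < 2$ in turn. The lemma then yields $(mSO_\a(G))^\a \, m^{1-\a} \le K_\a^\a \cdot M_1^{\a+1}(G)/2$, and raising both sides to the $1/\a$ power and rearranging gives the stated upper bound. For the equality characterization, Lemma~\ref{c:holder} requires $a = b$ together with $x_j^p = a\,y_j^q$ for every $j$; the first condition forces $\d^\a = \D^\a$, hence $\d = \D$, so $G$ must be regular, and conversely for any $r$-regular graph one has $K_\a = 1$ while both sides of the bound collapse to $m r$, realising the equality.

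The main obstacle I foresee is the algebraic verification that the substitution $1/p = \a$, $1/q = 1-\a$, and $b/a = (\D/\d)^\a$ inside Lemma~\ref{c:holder}'s definition of $K_p(a,b)$ reproduces precisely the exponents $(\a - \a^2)/2$, $-\a^2/2$, $(\a^2 - \a)/2$, and $\a^2/2$ appearing in the two branches of $K_\a^\a$, and that the sign switch of these exponents across the threshold $\a = 1/2$ is consistent with the swap of $(a/b)$ and $(b/a)$ between the two cases of Lemma~\ref{c:holder}. Once this bookkeeping is done, the rearrangement and the equality analysis are routine.
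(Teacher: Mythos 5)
Your proposal follows the paper's proof essentially step for step: the same converse H\"older inequality (Lemma~\ref{c:holder}) with $p=1/\alpha$, $q=1/(1-\alpha)$, the same identification of the three sums as $mSO_\alpha(G)$, $m$ and $\tfrac12 M_1^{\alpha+1}(G)$, and the same equality analysis. In fact your substitution $x_j=(d_u^\alpha+d_v^\alpha)/2$, $y_j=1$ is what the paper's displayed computation actually uses (the paper misstates it as $x_j=d_u^\alpha$, $y_j=d_v^\alpha$), so on that point you are more precise than the source.

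The one place that does not go through is exactly the step you flagged as the ``main obstacle.'' Lemma~\ref{c:holder} requires bounds $a\,y_j^q\le x_j^p\le b\,y_j^q$ on the quotients $x_j^p/y_j^q$. With your choice, $x_j^p/y_j^q=\left((d_u^\alpha+d_v^\alpha)/2\right)^{1/\alpha}=PM_\alpha(d_u,d_v)$, which lies in $[\delta,\Delta]$; the interval $[\delta^\alpha,\Delta^\alpha]$ you quote bounds $x_j$ itself, not $x_j^p$. A literal application of the lemma therefore gives $b/a=\Delta/\delta$ and produces, for $0<\alpha\le 1/2$, the constant $\alpha(\Delta/\delta)^{(1-\alpha)/2}+(1-\alpha)(\Delta/\delta)^{-\alpha/2}$, whose exponents differ from the stated $(\alpha-\alpha^2)/2$ and $-\alpha^2/2$ by a factor of $\alpha$. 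To land on the theorem's $K_\alpha$ one must take $a=\delta^\alpha$, $b=\Delta^\alpha$, which is what the paper does as well, but it is not what the lemma's hypothesis licenses; and since $K_p(a,b)$ is increasing in $b/a\ge 1$ and $(\Delta/\delta)^\alpha\le\Delta/\delta$, the stated constant is the \emph{smaller} one, so the discrepancy cannot be waved away as a harmless weakening. In short, your plan reproduces the paper's argument, but the bookkeeping you deferred is precisely where both your sketch and the paper's proof have a gap: as written, the method only justifies the theorem with $\Delta/\delta$ in place of $(\Delta/\delta)^\alpha$ inside $K_\alpha^\alpha$.
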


\begin{proof}
	For each $uv\in E(G)$ we have
	$$
	\d^\a \le \frac{d_u^\a + d_v^\a}{2} \le \D^\a\, .
	$$
	If we take $x_j=d_u^\a$, $y_j=d_v^\a$ and $p=1/\a$  by Lemma~\ref{c:holder} we have
	$$
	\begin{aligned}
		m^{1-\a}\left( mSO_\a(G) \right) ^\a
		& =
		\left( \sum_{uv\in E(G)} \left( \frac{d_u^\a + d_v^\a}{2} \right)^{1/\a}\right)^{\a}
		\left( \sum_{uv\in E(G)} 1^{\frac{1}{1-\a}} \right)^{1-\a} \\
		& \le K_{\a}^{\a} \sum_{uv\in E(G)}\frac{d_u^{\a}+d_v^{\a}}{2}
		= \frac{1}{2} \, K_{\a}^{\a} M_1^{\a+1}(G) \, ,
	\end{aligned}
	$$	
	where
	$$
	K_{\a}^{\a}= \left\lbrace
	\begin{array}{cc}
		\a\left( \frac{\D}{\d}\right)^{\frac{\a - \a^2}{2}} + (1-\a)\left( \frac{\D}{\d}\right)^{\frac{-\a^2}{2}}, & \text{if } 0<\a\le\frac{1}{2} \;, \\
		
		\a\left( \frac{\D}{\d}\right)^{\frac{\a^2-\a}{2}} + (1-\a)\left( \frac{\D}{\d}\right)^{\frac{\a^2}{2}}, & \text{if } \frac{1}{2} < \a < 1 \, ,
	\end{array}
	\right.
	$$
	and the equality holds if and only if $\d=\D$, i.e., $G$ is regular.
\end{proof}

The following inequalities are known for $x,y > 0$:
\begin{equation}\label{eq:a}
	\begin{aligned}
		x^a + y^a
		< (x + y)^a
		\le 2^{a-1}(x^a + y^a )
		\qquad & \text{if } \, a > 1,
		\\
		2^{a-1}(x^a + y^a )
		\le (x + y)^a
		< x^a + y^a
		\qquad & \text{if } \, 0<a < 1,
		\\
		(x + y)^a
		\le 2^{a-1}(x^a + y^a )
		\qquad & \text{if } \, a < 0,
	\end{aligned}
\end{equation}
and the equality in the second, third or fifth bound is attained for each $a$ if and only if $x=y$.

\begin{proposition}
	Let $G$ be a graph and $\a \in \RR \backslash\{0\}$, then
	$$
	\begin{aligned}
		2^{-1/\a}SO(G)
		< mSO_\a(G)
		\le 2^{-1/2}SO(G)
		\qquad & \text{if } \, 0< \a < 2,
		\\
		2^{-1/2}SO(G)
		\le mSO_\a(G)
		<2^{-1/\a}SO(G)
		\qquad & \text{if } \, \a > 2,
		\\
		mSO_\a(G)
		\le 2^{-1/2}SO(G)
		\qquad & \text{if } \, \a<0 \, ,
	\end{aligned}
	$$
	and the equality in the second, third or fifth bound is attained for each $\a$ if and only if each connected component of $G$ is a regular graph.
\end{proposition}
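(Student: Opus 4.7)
The plan is to reduce the whole statement to two edgewise inequalities that are then summed over $E(G)$. The weak bounds involving $2^{-1/2}SO(G)$ (the second, third and fifth) will follow from the power mean inequality~(\ref{ineq1}) applied with exponents $\a$ and $2$, while the strict bounds involving $2^{-1/\a}SO(G)$ (the first and fourth) will come from~(\ref{eq:a}) after the substitution $x=d_u^2$, $y=d_v^2$, $a=\a/2$.

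For the weak part, note that for every edge $uv$,
$$
\left(\frac{d_u^\a+d_v^\a}{2}\right)^{1/\a}=PM_\a(d_u,d_v), \qquad 2^{-1/2}\sqrt{d_u^2+d_v^2}=PM_2(d_u,d_v),
$$
so~(\ref{ineq1}) gives $PM_\a(d_u,d_v)\le PM_2(d_u,d_v)$ when $\a<2$ (covering both $0<\a<2$ and $\a<0$) and the reverse when $\a>2$. Summing over $uv\in E(G)$ immediately yields the second, third and fifth bounds. Since the edgewise equality case in~(\ref{ineq1}) is $d_u=d_v$, the summed equality holds iff every edge of $G$ has endpoints of equal degree, i.e.\ iff each connected component of $G$ is regular.

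For the strict part, set $x=d_u^2$, $y=d_v^2$, $a=\a/2$ in~(\ref{eq:a}). When $0<\a<2$ one has $0<a<1$ and~(\ref{eq:a}) gives $(d_u^2+d_v^2)^{\a/2}<d_u^\a+d_v^\a$; raising both sides to the $1/\a>0$ power and multiplying by $2^{-1/\a}$ produces $2^{-1/\a}\sqrt{d_u^2+d_v^2}<\left((d_u^\a+d_v^\a)/2\right)^{1/\a}$, and summation delivers the first bound. When $\a>2$ one has $a>1$ and~(\ref{eq:a}) gives the reverse inequality $d_u^\a+d_v^\a<(d_u^2+d_v^2)^{\a/2}$, from which the same manipulation produces the fourth bound.

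There is no substantial obstacle in this proof; it is essentially two edgewise applications of pre-stated inequalities followed by summation. The only care required is tracking the sign of $\a$ when raising to the $1/\a$ power so the inequality direction is preserved, and noticing that the two strict inequalities from~(\ref{eq:a}) used for the first and fourth bounds are unconditionally strict for all positive $d_u,d_v$ (they do not require $d_u\ne d_v$), which is precisely why those two bounds carry no equality clause.
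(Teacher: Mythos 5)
Your proof is correct, but it is organized differently from the paper's. The paper obtains all five bounds from the single family of inequalities~(\ref{eq:a}): it divides each line of~(\ref{eq:a}) by $2^a$, substitutes $x=d_u^\a$, $y=d_v^\a$, $a=2/\a$ (so that $0<\a<2$, $\a>2$, $\a<0$ correspond to $a>1$, $0<a<1$, $a<0$ respectively), takes square roots, and sums over edges; the strict and non-strict halves of each line, together with the equality clause of~(\ref{eq:a}), come out simultaneously. You instead split the proposition: the non-strict bounds (second, third, fifth) you read off as the instance $\a_2=2$ (resp.\ $\a_1=2$) of the power-mean monotonicity~(\ref{ineq1}), using $PM_2(d_u,d_v)=2^{-1/2}\sqrt{d_u^2+d_v^2}$, which makes the equality characterization (each component regular) immediate and in fact shows these three bounds are already of the same nature as~(\ref{ineq11}); the strict bounds (first, fourth) you get from~(\ref{eq:a}) with the reciprocal substitution $x=d_u^2$, $y=d_v^2$, $a=\a/2$. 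Both routes are sound, and your explicit remark that the strict halves of~(\ref{eq:a}) hold even when $x=y$ (so no equality clause is possible for the first and fourth bounds) is a point the paper leaves implicit. The only caveat: for the equality analysis you invoke that~(\ref{ineq1}) is strict whenever $x\ne y$, which is true for power means with distinct exponents but is not literally what the paper's wording of~(\ref{ineq1}) asserts (it only says equality \emph{is attained for} $x=y$); you should either cite the strict form of the power mean inequality or, as the paper does, extract the ``only if'' direction from the equality clause of~(\ref{eq:a}).
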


\begin{proof}
	If we divide each one of the inequalities in (\ref{eq:a}) by $2^a$ we obtain
	$$
	\begin{aligned}
		2^{-a}\left( {x^a + y^a}\right)
		< \left( \frac{x + y}{2}\right) ^a
		\le \frac{x^a + y^a}{2}
		\qquad & \text{if } \, a > 1,
		\\
		\frac{x^a + y^a}{2}
		\le \left( \frac{x + y}{2} \right) ^a
		<2^{-a} \left( {x^a + y^a} \right)
		\qquad & \text{if } \, 0<a < 1,
		\\
		\left( \frac{x + y}{2} \right) ^a
		\le \frac{x^a + y^a}{2}
		\qquad & \text{if } \, a < 0 .
	\end{aligned}
	$$
	If we take $x=d_u^\a$, $y=d_v^\a$ and $a=2/\a$; then the previous inequalities give
	$$
	\begin{aligned}
		2^{-2/\a}\left( {d_u^2 + d_v^2}\right)
		< \left( \frac{d_u^\a + d_v^\a}{2}\right) ^{2/\a}
		\le \frac{d_u^{2} + d_v^{2}}{2}
		\qquad & \text{if } \, 0< \a < 2,
		\\
		\frac{d_u^{2} + d_v^{2}}{2}
		\le \left( \frac{d_u^\a + d_v^\a}{2} \right) ^{2/\a}
		<2^{-{2/\a}} \left( {d_u^{2} + d_v^{2}} \right)
		\qquad & \text{if } \, \a > 2,
		\\
		\left( \frac{d_u^\a + d_v^\a}{2} \right) ^{2/\a}
		\le \frac{d_u^{2} + d_v^{2}}{2}
		\qquad & \text{if } \, \a < 0 ,
	\end{aligned}
	$$
	and the equality in the second, third or fifth bounds are attained for each $a$ if and only if $d_u=d_v$.
	From this we obtain
	$$
	\begin{aligned}
		2^{-1/\a}\left( {d_u^2 + d_v^2}\right) ^{1/2}
		< \left( \frac{d_u^\a + d_v^\a}{2}\right) ^{1/\a}
		\le 2^{-1/2 } \left( {d_u^{2} + d_v^{2}} \right) ^{1/2}
		\qquad & \text{if } \, 0< \a < 2,
		\\
		2^{-1/2 } \left( {d_u^{2} + d_v^{2}} \right) ^{1/2}
		\le \left( \frac{d_u^\a + d_v^\a}{2} \right) ^{1/\a}
		<2^{-1/\a}\left( {d_u^2 + d_v^2}\right) ^{1/2}
		\qquad & \text{if } \, \a > 2,
		\\
		\left( \frac{d_u^\a + d_v^\a}{2} \right) ^{1/\a}
		\le 2^{-1/2 } \left( {d_u^{2} + d_v^{2}} \right) ^{1/2}
		\qquad & \text{if } \, \a < 0 ,
	\end{aligned}
	$$
	and the equality in the second, third or fifth bounds are attained for each $a$ if and only if $d_u=d_v$.
	The desired result is obtained by adding up for each $uv\in E(G)$.	
\end{proof}

The following result appears in [4].
\begin{lemma}\label{l:a_ir}
	If $a_{i} > 0$ for $1 \leq i \leq k$ and $r \in \mathbb{R}$, then
	$$
	\begin{aligned}
		\sum_{i=1}^{k} a_{i}^{r} \geq k^{1-r}\left(\sum_{i=1}^{k} a_{i}\right)^{r}, &\quad \text { if }\, r \leq 0 \text { or } r \geq 1, \\
		\sum_{i=1}^{k} a_{i}^{r} \leq k^{1-r}\left(\sum_{i=1}^{k} a_{i}\right)^{r}, &\quad \text { if }\, 0 \leq r \leq 1 .
	\end{aligned}
	$$
\end{lemma}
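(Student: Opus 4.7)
The plan is to derive the inequality as a direct application of Jensen's inequality (Lemma \ref{l:Jensen}) to the single-variable function $g(x)=x^{r}$ on $(0,\infty)$. The first step is to determine the range of $r$ for which $g$ is convex versus concave: computing $g''(x)=r(r-1)x^{r-2}$ and noting $x^{r-2}>0$, one sees that $g''\ge 0$ precisely when $r(r-1)\ge 0$, i.e., $r\le 0$ or $r\ge 1$, while $g''\le 0$ precisely when $0\le r\le 1$. This is the only analytic input required.

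Next, I would apply Lemma \ref{l:Jensen} with $x_i=a_i$ and $m=k$. In the convex regime ($r\le 0$ or $r\ge 1$) Jensen gives
$$
\Big(\frac{a_1+\cdots+a_k}{k}\Big)^{r} \le \frac{1}{k}\sum_{i=1}^{k} a_i^{r},
$$
while in the concave regime ($0\le r\le 1$) the inequality reverses. Multiplying through by $k$ and rewriting $(\sum a_i/k)^{r}=k^{-r}(\sum a_i)^{r}$, one obtains
$\sum_{i=1}^{k} a_i^{r}\ge k^{1-r}(\sum_{i=1}^{k} a_i)^{r}$ in the first case and the reverse inequality in the second. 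The exponent $k^{1-r}$ thus emerges automatically from Jensen's normalization factor.

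There is essentially no obstacle; the only bookkeeping issue is the boundary exponents $r\in\{0,1\}$, which appear in both branches of the statement and so must yield equality. These cases reduce respectively to $\sum a_i^{0}=k=k^{1}(\sum a_i)^{0}$ and $\sum a_i=k^{0}\sum a_i$, so the weak inequalities on both sides are compatible. One could alternatively bypass Jensen entirely and treat the result as the standard power-mean inequality $M_{r}\ge M_{1}$ for $r\ge 1$ (and $\le$ for $r\le 1$) after raising to the $r$-th power, but the Jensen route is shorter and uses a tool already invoked in the previous theorem, so I would prefer it for uniformity of exposition.
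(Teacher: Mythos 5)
Your proof is correct. Note that the paper itself gives no proof of this lemma at all: it simply states ``The following result appears in [4]'' and imports it from the literature, so there is nothing to compare your argument against. Your Jensen-based derivation is the standard one and is complete: the convexity analysis via $g''(x)=r(r-1)x^{r-2}$ correctly splits the two regimes, and the factor $k^{1-r}$ does fall out of the normalization exactly as you describe. The only cosmetic point is that Lemma~\ref{l:Jensen} as stated in the paper takes $g$ convex on all of $\RR$ and $x_1,\dots,x_m\in\RR$, whereas $x^r$ is only defined (and convex or concave) on $(0,\infty)$; since Jensen's inequality holds verbatim for convex functions on any interval containing the points, this is harmless, but a careful write-up would either invoke the interval version explicitly or remark that the $a_i$ are positive so the restriction is immaterial.
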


\begin{proposition}
	If $G$ is a graph with $m$ edges, then
	$$
	\begin{aligned}
		KA_{\a,\b}(G)\ge m^{1-\b}(M_1^{\a+1}(G))^{\b}
		&\quad \text { if }\, \b \leq 0 \text { or } \b \geq 1, \\
		KA_{\a,\b}(G)\le m^{1-\b}(M_1^{\a+1}(G))^{\b}
		&\quad \text { if }\, 0\le \b \le 1 \,.
	\end{aligned}
	$$
\end{proposition}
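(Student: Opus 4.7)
The plan is to apply Lemma~\ref{l:a_ir} directly to the sum defining $KA^1_{\a,\b}(G) = \sum_{uv\in E(G)} (d_u^\a + d_v^\a)^\b$, taking as the $a_i$ the quantities $d_u^\a + d_v^\a$ indexed over the $m$ edges of $G$, with $r = \b$ and $k = m$. The only nonroutine ingredient is recognizing that the ``inner'' sum to which the lemma is applied collapses to a generalized first Zagreb sum.

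First, I would record the combinatorial identity
$$
\sum_{uv\in E(G)} \big(d_u^\a + d_v^\a\big) \;=\; \sum_{u\in V(G)} d_u \cdot d_u^\a \;=\; \sum_{u\in V(G)} d_u^{\a+1} \;=\; M_1^{\a+1}(G),
$$
which follows from the usual double counting: in the left hand sum each vertex $u$ contributes the term $d_u^\a$ exactly once for every edge incident with it, hence $d_u$ times in total. This identity is the bridge between $KA^1_{\a,\b}(G)$ and $M_1^{\a+1}(G)$.

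Next, setting $a_i := d_u^\a + d_v^\a > 0$ as $uv$ ranges over $E(G)$ and $k := m$, $r := \b$, Lemma~\ref{l:a_ir} gives in the case $\b \le 0$ or $\b \ge 1$
$$
KA^1_{\a,\b}(G) = \sum_{uv\in E(G)} \big(d_u^\a + d_v^\a\big)^\b \;\ge\; m^{1-\b}\Bigg(\sum_{uv\in E(G)} \big(d_u^\a + d_v^\a\big)\Bigg)^{\!\b} = m^{1-\b}\big(M_1^{\a+1}(G)\big)^\b,
$$
while for $0 \le \b \le 1$ the inequality is reversed, producing the second asserted bound.

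The ``hard part'' here is really just noticing the double-count identity; once that is in hand, Lemma~\ref{l:a_ir} does all the work, and no optimization over degrees is required. I would also briefly note that if one cares about the equality cases, they come directly from those in Lemma~\ref{l:a_ir} (the classical power-mean inequality), i.e.\ equality holds precisely when all $d_u^\a + d_v^\a$ agree over $uv \in E(G)$, which in the connected case forces $G$ to be regular or biregular; however, since the stated proposition records only the inequalities, this comment can be omitted.
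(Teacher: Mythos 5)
Your proposal is correct and follows essentially the same route as the paper: apply Lemma~\ref{l:a_ir} with $a_i = d_u^\a + d_v^\a$, $r=\b$, $k=m$, and identify $\sum_{uv\in E(G)}(d_u^\a+d_v^\a)$ with $M_1^{\a+1}(G)$ by double counting. The only difference is that you state this last identity explicitly, whereas the paper leaves it implicit; this is a small improvement in completeness rather than a different argument.
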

\begin{proof}
	If we take $a_i=d_u^\a+d_v^{a}$ and $r=\b$, by Lemma~\ref{l:a_ir} we have
	
	$$
	\begin{aligned}
		\sum_{uv\in E(G)} \left( d_u^\a+d_v^a \right) ^{\b}
		\geq m^{1-\b}\left(\sum_{uv\in E(G)} \left( d_u^\a+d_v^a \right)\right)^{\b},
		&\quad \text { if } \b \leq 0 \text { or } \b \geq 1, \\
		\sum_{uv\in E(G)} \left( d_u^\a+d_v^a \right) ^{\b}
		\leq m^{1-\b}\left(\sum_{uv\in E(G)} \left( d_u^\a+d_v^a \right)\right)^{\b},
		&\quad \text { if } 0 \leq \b \leq 1 .
	\end{aligned}
	$$
\end{proof}

Given a graph $G$, let us define the mean Sombor matrix $m\mathcal{S\!M}_{\a}(G)$ with entries
\begin{equation}\label{meanSmat}
{a}_{uv}:=\left\lbrace \begin{array}{ll}
	\left( \frac{d_u^\a + d_v^\a}{2}\right) ^{1/\a} , & \text{if } uv\in E(G)\, , \\
	0, & \text{otherwise} \, .
\end{array}
\right.
\end{equation}
One can easily check the following result about the trace of the matrix $m\mathcal{S\!M}_{\a}(G)^2$:
\begin{equation}\label{e:trPM}
	\operatorname{tr} \left(m\mathcal{S\!M}_{\a}(G)^2 \right)
	=
	\sum_{uv\in E(G)}\left( \frac{d_u^\a + d_v^\a}{2}\right) ^{2/\a} \, .
\end{equation}

Denote by $\sigma^2$ the variance of the sequence of the terms
$\left\lbrace \left( \frac{d_u^\a + d_v^\a}{2}\right) ^{1/\a}\right\rbrace $ appearing in the definition of $mSO_\a(G)$.
\begin{proposition}
	Let $G$ be a graph, then
	$$
	mSO_a(G)=\sqrt{ \frac{m}{2} \, \operatorname{tr}\left(m\mathcal{S\!M}_{\a}(G)^2\right) - m^2\sigma^2}\, .
	$$
\end{proposition}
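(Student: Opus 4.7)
The plan is to derive the identity from the elementary definition of variance applied to the sequence
$\bigl\{ a_{uv} = \bigl(\tfrac{d_u^\a+d_v^\a}{2}\bigr)^{1/\a} \bigr\}_{uv\in E(G)}$,
using equation (\ref{e:trPM}) merely as the bookkeeping device that packages $\sum_{uv\in E(G)} a_{uv}^2$ as half of $\operatorname{tr}\bigl(m\mathcal{S\!M}_{\a}(G)^2\bigr)$ (the factor of $2$ coming from the symmetry of $m\mathcal{S\!M}_{\a}(G)$, so that each off-diagonal edge entry $a_{uv}$ is counted once as $a_{uv}\cdot a_{vu}$ and once as $a_{vu}\cdot a_{uv}$ in the trace of the square).

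\smallskip
\noindent\textbf{Main steps.} First I would introduce the shorthand $S_1 := \sum_{uv\in E(G)} a_{uv}$ and $S_2 := \sum_{uv\in E(G)} a_{uv}^2$, so that $S_1 = mSO_\a(G)$ by the definition (\ref{MG}). Since $E(G)$ has $m$ elements, the arithmetic mean of the sequence is $\bar a = S_1/m = mSO_\a(G)/m$. Second, invoking the standard identity for the variance of a finite sequence,
$$
\sigma^2 \;=\; \frac{1}{m}\sum_{uv\in E(G)} a_{uv}^2 \;-\; \bar a^{\,2}
\;=\; \frac{S_2}{m} \;-\; \frac{(mSO_\a(G))^2}{m^2}\,,
$$
I rearrange to obtain
$$
(mSO_\a(G))^2 \;=\; m\,S_2 \;-\; m^2 \sigma^2 .
$$
Third, I relate $S_2$ to the matrix trace: since $m\mathcal{S\!M}_{\a}(G)$ is symmetric with zero diagonal, its square has diagonal entries $\sum_{v} a_{uv}^2$, hence $\operatorname{tr}\bigl(m\mathcal{S\!M}_{\a}(G)^2\bigr) = 2\, S_2$, which is exactly the content of (\ref{e:trPM}) once one accounts for the double appearance of each edge in a trace computation. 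Substituting $S_2 = \tfrac{1}{2}\operatorname{tr}\bigl(m\mathcal{S\!M}_{\a}(G)^2\bigr)$ yields
$$
(mSO_\a(G))^2 \;=\; \frac{m}{2}\,\operatorname{tr}\bigl(m\mathcal{S\!M}_{\a}(G)^2\bigr) \;-\; m^2 \sigma^2 .
$$
Since $mSO_\a(G)\ge 0$, taking the (nonnegative) square root closes the argument.

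\smallskip
\noindent\textbf{Expected obstacle.} There is no real obstacle here, only a bookkeeping point that must be stated carefully: equation (\ref{e:trPM}) as displayed already encodes the symmetric-matrix identity $\operatorname{tr}(A^2) = 2\sum_{uv\in E(G)} a_{uv}^2$, which is precisely why the coefficient $m/2$ (rather than $m$) appears in front of the trace in the claimed formula. Once this is acknowledged, the proof is a one-line manipulation of the variance identity.
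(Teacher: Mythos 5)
Your proof is correct and takes essentially the same route as the paper: the rearranged variance identity for the edge sequence combined with the trace relation $\operatorname{tr}\left(m\mathcal{S\!M}_{\a}(G)^2\right)=2\sum_{uv\in E(G)}a_{uv}^2$. You are in fact more careful than the paper about the factor of $2$: as displayed, Eq.~(\ref{e:trPM}) omits the doubling coming from the symmetry of the matrix, while the paper's proof (like yours) silently uses $\sum_{uv\in E(G)}a_{uv}^2=\frac{1}{2}\operatorname{tr}\left(m\mathcal{S\!M}_{\a}(G)^2\right)$, which is the version consistent with the stated formula.
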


\begin{proof}
	By the definition of $\sigma^2$, we have
	$$
	\sigma^2
	=
	\frac{1}{m}  \sum_{uv\in E(G)}\left(\left( \frac{d_u^\a + d_v^\a}{2}\right) ^{1/\a} \right) ^2
	-
	\left( \frac{1}{m} \sum_{uv\in E(G)}\left( \frac{d_u^\a + d_v^\a}{2}\right) ^{1/\a} \right) ^2
	\, ,
	$$
	then using the expression~\eqref{e:trPM} we have
	$$
	\sigma^2= \frac{1}{2m} \,\operatorname{tr}\left(m\mathcal{S\!M}_{\a}(G)^2\right) - \frac{1}{m^2} \, mSO_\a(G)^2 ,
	$$
	and the result follows from this equality.
\end{proof}

\begin{theorem}
	Let $G$ be any graph, then
	$$
	mSO_2(G) \le M_1(G)-M_2^{1/2}(G) \, ,
	$$
	where $M_2^{1/2}$ is the variable second Zagreb index $M_2^{\alpha}$ at $\alpha=1/2$, and the equality is attained if and only if each connected component of $G$ is a regular graph.
\end{theorem}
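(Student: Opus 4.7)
The plan is to reduce the inequality to a per-edge estimate. Since $mSO_2(G)=\sum_{uv\in E(G)}\sqrt{(d_u^2+d_v^2)/2}$, $M_1(G)=\sum_{uv\in E(G)}(d_u+d_v)$, and $M_2^{1/2}(G)=\sum_{uv\in E(G)}\sqrt{d_ud_v}$, it suffices to prove that for any positive reals $x,y$,
$$
\sqrt{\frac{x^2+y^2}{2}} \;\le\; (x+y)-\sqrt{xy},
$$
with equality iff $x=y$, and then apply it with $x=d_u$, $y=d_v$ and sum over the edges.

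Next I would verify that both sides are nonnegative so squaring is legitimate. Indeed $(x+y)-\sqrt{xy}\ge 2\sqrt{xy}-\sqrt{xy}=\sqrt{xy}>0$. The natural simplification is the substitution $s=\sqrt{x}$, $t=\sqrt{y}$: the inequality becomes
$$
\sqrt{\tfrac{s^4+t^4}{2}} \;\le\; s^2+t^2-st,
$$
and after squaring it reduces to
$$
\tfrac{s^4+t^4}{2} \le (s^2+t^2-st)^2 = s^4+t^4+3s^2t^2-2s^3t-2st^3,
$$
which after multiplying by $2$ and rearranging is nothing but $(s-t)^4\ge 0$. This inequality is obvious and is strict unless $s=t$, i.e.\ unless $d_u=d_v$.

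Summing the edge-wise inequality over $uv\in E(G)$ yields the claimed bound. For the equality case, the per-edge analysis forces $d_u=d_v$ on every edge; in a connected graph this means all vertices share a common degree, and in general the same must hold in every connected component, so each component of $G$ must be regular. Conversely, if every component of $G$ is regular, then on each edge $d_u=d_v$, both sides of the per-edge inequality agree, and summing gives equality globally.

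The main obstacle is simply spotting that the per-edge inequality collapses to $(\sqrt{x}-\sqrt{y})^4\ge 0$; once the substitution $s=\sqrt{x}$, $t=\sqrt{y}$ is made, the rest is a routine expansion. No other analytic input (H\"older, Jensen, etc.) is needed, which makes this the shortest argument among those in the preceding sections.
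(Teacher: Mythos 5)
Your proof is correct, and it follows the same overall decomposition as the paper: everything reduces to the scalar inequality $x+y-\sqrt{xy}\ \ge\ \sqrt{(x^2+y^2)/2}$ for positive $x,y$, with equality iff $x=y$, applied edgewise and summed. Where you genuinely differ is in how that inequality is proved. The paper sets $f(x,y)=\left(x+y-\sqrt{xy}\,\right)^2-\tfrac{x^2+y^2}{2}$ on $\d\le x\le y\le\D$, computes $\partial f/\partial x=\bigl(\sqrt{x}-\sqrt{y}\,\bigr)^3/\sqrt{x}\le 0$, and deduces $f(x,y)\ge f(y,y)=0$ by monotonicity. You instead substitute $s=\sqrt{x}$, $t=\sqrt{y}$, observe that both sides are nonnegative (so squaring is an equivalence), and expand to see that the squared inequality is precisely $(s-t)^4\ge 0$. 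I checked the expansion: $(s^2+t^2-st)^2-\tfrac{1}{2}(s^4+t^4)=\tfrac{1}{2}(s-t)^4$, so your identity is right, and it delivers the equality case ($s=t$, i.e.\ $d_u=d_v$ on every edge, i.e.\ every component regular) for free. Your route is purely algebraic, avoids calculus and the ordering assumption $x\le y$, and your explicit check that $(x+y)-\sqrt{xy}\ge\sqrt{xy}>0$ before squaring is a detail the paper leaves implicit when it passes from $f\ge 0$ back to the unsquared inequality. The two arguments are close cousins --- the paper's derivative is the cube of $\sqrt{x}-\sqrt{y}$ while your slack term is half its fourth power --- but yours is the shorter and more self-contained of the two.
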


\begin{proof}
	Let be $\d$, $\D$ the minimum and maximum degree of $G$, respectively. Let's analyze the behavior of the function
	$$
	f(x,y)= \left( x+y-\sqrt{xy} \, \right) ^2 - \frac{x^2+y^2}{2}\, ,
	$$
	for $\d \le x\le y \le \D$. We have
	$$
	\begin{aligned}
		\frac{\partial f}{\partial x}(x,y)
		&= 2(x+y-\sqrt{xy} \,)\left( 1-\frac{1}{2}\sqrt{\frac{y}{x}} \, \right) -x \\
		&= x - 3\sqrt{xy} + 3y -\frac{y\sqrt{y}}{\sqrt{x}} \\
		&= \frac{x\sqrt{x}- 3x\sqrt{y} + 3y\sqrt{x} - y\sqrt{y}}{\sqrt{x}} \\
		&= \frac{\left( \sqrt{x} - \sqrt{y} \, \right) ^3}{\sqrt{x}} \le 0 \, ,
	\end{aligned}
	$$
	so $f$ is a decreasing function for each $y$. Thus, we have $f(x,y)\ge f(y,y)=0$, so
	$$
	x+y-\sqrt{xy}\ge \sqrt{\frac{x^2+y^2}{2}}\, ,
	$$
	and the equality is attained if and only if $x=y$. Therefore for any $uv\in E(G)$,
	$$
	d_u+d_v - \sqrt{d_ud_v} \ge \sqrt{\frac{d_u^2+d_v^2}{2}}\,
	$$
	and the equality is attained if and only if $d_u=d_v$. The desired result is obtained by adding up for each $uv\in E(G)$.
\end{proof}

\section{Discussion and conclusions}

We have introduced a degree--based variable topological index inspired on the power mean (also known
as generalized mean and H\"older mean).
We named this new index as the mean Sombor index $mSO_\alpha(G)$, see Eq.~(\ref{MG}).
For given values of $\alpha$, the mean Sombor index is related to well-known topological indices, in particular with
several Sombor indices.

In addition, through a QSPR study, we showed that mean Sombor indices are suitable to model
acentric factor, boiling point, heat capacity at constant pressure, standard enthalpy of vaporization,
enthalpy of formation, heat of vaporization at 25$^{\circ}$C,
enthalpy of vaporization, and entropy of octane isomers; see Section~\ref{QSPR}.

We have also discussed some mathematical properties of mean Sombor indices as well as stated bounds
and new relationships with known topological indices; see Section~\ref{ineq}, where the mean Sombor matrix
was also introduced in Eq.~(\ref{meanSmat}).

Finally, we would like to remark that, in addition to all the known indices that the mean Sombor index
reproduces, we discover the indices
$$
mSO_{-\infty}(G) \equiv {mSO}_{\alpha\to-\infty}(G) = \sum_{uv \in E(G)} \min(d_u,d_v)
$$
and
$$
mSO_\infty(G) \equiv {mSO}_{\alpha\to\infty}(G) = \sum_{uv \in E(G)} \max(d_u,d_v) \, ;
$$
which, from the QSPR study of Section~\ref{QSPR}, were shown to be good predictors of
the standard enthalpy of vaporization, the enthalpy of vaporization, and the heat of vaporization at
25$^{\circ}$C of octane isomers.
It is fair to mention that several known topological indices include the min/max functions; among them
we can mention the min-max (and max-min) rodeg index, the min-max (and max-min) sdi index, and the min-max
(and max-min) deg index, introduced in Ref.~\cite{VG10}. However, to the best of our knowledge, the indices
$mSO_{\pm \infty}(G)$ have not been theoretically studied before (for an exception where the equivalent
Stolarsky--Puebla indices have been computationally applied to random networks see~\cite{MAAS22}).
Thus, we believe that a theoretical study of these two new indices is highly pertinent.

\section*{Acknowledgment}

J.A.M.-B. acknowledges financial support from CONACyT (Grant No.~A1-S-22706) and
BUAP (Grant No.~100405811-VIEP2021).
E.D.M. and J.M.R. were supported by a grant from Agencia Estatal de Investigaci\'on (PID2019-106433GBI00/
AEI/10.13039/501100011033), Spain. J.M.R. was supported by the Madrid Government (Comunidad de Madrid-Spain) under the Multiannual Agreement with UC3M in the line of Excellence of University Professors (EPUC3M23), and in the context of the V PRICIT (Regional Programme of Research and Technological Innovation).

\footnotesize

\end{document}